\theoremstyle{plain}% Theorem-like structures provided by amsthm.sty
\newtheorem{theorem}{Theorem}[section]
\newtheorem{corollary}[theorem]{Corollary}
\theoremstyle{definition}
\newtheorem{definition}[theorem]{Definition}
\theoremstyle{remark}
\newtheorem{remark}{Remark}
\begin{document}

%\articletype{ARTICLE TEMPLATE}% Specify the article type or omit as appropriate

\title{The dissipativity and controllability of input affine systems with the supply rate $\omega(u, \dot{y})$}

\author{
\name{Qin Xu, \thanks{CONTACT Liu Liu Email: beth.liu@dlut.edu.cn} Liu Liu and Yufeng Lu}
\affil{School of Mathematical Sciences, Dalian University of Technology, Dalian, People's Republic of China}
}

\maketitle
\begin{abstract}
The paper is considered with the dissipative theory and feedback control under the framework of dissipation  with the supply rate $\omega(u, \dot{y})$ for the linear/nonlinear time-invariant input affine system. A necessary and sufficient condition of dissipativity of the class of systems is presented. Furthermore, we obtain necessary and sufficient conditions of the negative imaginary property, input strictly negative imaginary property and output strictly negative imaginary property for the nonlinear input affine system, which can cover the case that the linear time-invariant system in the literature. Besides,  under given conditions, we consider the equivalence between the negative imaginary property and $L_{2}$-gain stability. In other words, the system with negative imaginary property may be $L_{2}$-gain stable under some certain conditions; conversely, the $L_{2}$-gain stable system may also have negative imaginary property.
Last but not least, we consider the equivalence between the asymptotical stability and strict dissipativity with the supply rate $\omega(u, \dot{y})$ and some dissipative matrices  are given for a certain linear time-invariant system.
\end{abstract}

\begin{keywords}
Dissipativity; input affine systems; negative imaginary; feedback stability
\end{keywords}

\section{Introduction}

Dissipativity theory \citep[see][]{W72a, W72b} and the 
input-output approach (especially $L_{2}$-gain techniques and the small-gain theorem \citep{van16}) have played an important role in the analysis and synthesis of control systems. Numerous papers  \citep{O01} and books make an effort to do the subject \citep[see][]{I99, OR98a, L00, S97}. This is due to its precise physical meaning and profound relationship with the underlying algebraic structure of linear systems, as well as rich frequency domain interpretations that are applicable to particular forms of nonlinear systems. In particular, passivity provides a unified framework for understanding the stability or instability of negative feedback interconnection of linear and nonlinear systems. As we all know, negative feedback interconnection of passive systems works well because there is a natural concept of energy associated with closed-loop systems, which is dissipated with the  inner  product of  input $u$ and output $y$ as the supply rate.

Negative imaginary system theory comes into being as a supplement to positive realness theory or passivity theory. This theory has developed in linear time-invariant systems, nonlinear time-invariant systems and linear time-varying systems. The linear time-invariant stable negative imaginary system is proposed for the first time in  \citeyear{LP08} by \citeauthor{LP08}. Then, the theory is extended to allow poles on the imaginary axis \citep[see][]{MM14, XP10}, non-rational symmetric case \citep[see][]{FN13, FLN16, FLL17}, non-proper case \citep{LX16} and discrete-time negative imaginary systems \citep[see][]{FLL17, LJ17}, respectively. The notion of nonlinear time-invariant negative imaginary systems is given by the way of energy dissipation \citep{GP18}. The notion of linear time-varying systems is also provided in the terms of time-domain dissipation \citep{KPL21}. Based on the special characteristics of the negative imaginary systems, some applications of the negative imaginary systems are found in the following areas: lightly damped (undamped) structure \citep{Petersen16}, nano-positioning systems
\citep[see][]{K14, SA05}, multi-agent systems \citep[see][]{S19, W15a, W15b}, vehicle platoons \citep{C10}, non-square systems with polytopic uncertainty \citep{B21} and robust cooperative control of networked train platoon \citep{L21}, etc.

Motivated by the dissipative theory for the passive systems \citep{van16}, we naturally think about the dissipative theory for the negative imaginary systems. Firstly, we consider the dissipative property with respect  to the quadratic supply rate $\omega(u,\dot{y})=\dot{y}^{T}Q\dot{y}+2\dot{y}^{T}Su+u^{T}Ru$ for a class of nonlinear input affine systems, where $u$ is the input and $\dot{y}$ is the derivation of the output $y$. We obtain the relationship between the storage function and state-space representation of this class of systems under some assumptions. Based on these results, we get the necessary and sufficient conditions of negative imaginary property, input strictly negative imaginary property and output strictly negative imaginary property, which are equivalent to the counterclockwise input-output dynamic \citep{A06}, for this class of nonlinear input affine systems, respectively.  Furthermore, we consider the relationship  between the negative imaginary property and $L_{2}$-gain stability. It follows that the system with negative imaginary property may be $L_{2}$-gain stable under some certain conditions; conversely, the $L_{2}$-gain stable system may also have negative imaginary property. The idea stems from the relationship between the passivity and $L_{2}$-gain \citep[see][]{van16}. It states that if the system be output strictly passive, then the system has the finite $L_{2}$-gain. Finally, we give an example to explain the possibility of this result.

%%
%%The theory of dissipative systems is closely related to the theory of Integral Quadratic Constraints (IQCs). It is demonstrated how a number of widely used tools for stability analysis can be conveniently unified and generalized using integral quadratic constraints (IQCs). The approach has emerged under a combination of influences from the western and russian traditions of control theory. IQCs, introduced in \cite{MRa}, \cite{MRb}, \cite{MRc} provide a general framework for robustness analysis. In this framework, the system is separated into a feedback connection of a known linear time-invariant (LTI) system and a perturbation whose input-output behavior is described by an IQC. An IQC stability theorem was formulated in \cite{MRa}, \cite{MRc} with frequency domain conditions. Moreover, it is shown in \cite{S}, under mild technical conditions, that an IQC with frequency domain conditions has an equivalent representation as a finite-horizon time-domain constraint. This provides an idea for dealing with the robust stability of uncertain complex systems. On the basis of predecessors, the paper presents a sufficient condition  of the robust stability for a class of the  nonlinear affine negative imaginary system being interconnected with nonlinearities and/or uncertainties, where Integral quadratic constraints (IQCs) are used  to model the uncertain and/or nonlinear components.

On the realm of Lyapunov stability theory, as mentioned above, the  dissipativity, along with a special case of it named passivity, whose supply rate  is the inner product of the input $u$ and the output $y$, is a special interest. As far as we know, the theory of dissipation and passivity have an extensive research topic, where various control approaches have been proposed for feedback stabilization of linear and nonlinear systems \citep{B20, H77, H90, OR98b}. For instance, feedback passive equivalence and globally asymptotic stabilization of minimum-phase systems are handled  \citep[see][] {K89, B91}. Passivity-based stabilization of nonzero equilibria is studied in \citeyear{OR98b} by \citeauthor{OR98b} and \citeyear{O04} by \citeauthor{O04}. Beyond the problem of equilibrium stabilization, the dissipative control problem of linear time-invariant plants via static output feedback is addressed \citep[see][]{F13, F19}. Linear static state feedback and dynamic output feedback are investigated in \citeyear{X98} by  \citeauthor{X98}.

In this paper, we consider the dissipative control problem with the quadratic supply rate $\omega(u, \dot{y})$ for some certain linear time-invariant plants via static output feedback. The equivalence between the asymptotical stability by a static output feedback control law and strict dissipativity under given  dissipative matrices $(Q, S, R)$ is  presented. This avoids finding the dissipative matrices. As proposed \citep{M18}, we know this job is not so easy to be done. Lastly, we use a simple example to verify the results given in the paper.

The structure of the paper is as follows. Section 2 provides some notations and fundamental definitions used in this paper. Section 3 gives the dissipative property with the supply rate $\omega(u, \dot{y})$ for nonlinear input affine systems, then we obtain a necessary and sufficient  condition of negative imaginary property. Besides, we get the relationship between the $L_{2}$-gain stability and   the negative imaginary property for input affine system under some certain assumptions. Section 4 obtains the equivalence between the asymptotical stability and strict dissipation for a certain linear time-invariant system. Section 5 gives the conclusions of this paper.

\section{Notation and preliminary}

$\mathbb{R}$ denotes the set of real numbers.  $\mathbb{R}^{n}$ is the set of real column vectors and $\mathbb{R}^{p\times m}$ denotes the set of $p\times m$ real  matrices.  $A^{T}$ is the transpose of $A\in \mathbb{R}^{p\times m}$, and $A^{-1}$ is the inverse of $A\in \mathbb{R}^{p\times m}$. $I$ is the identity matrix of compatible dimensions. $\mathcal{C}^{1}$ is the set of functions whose partial derivatives exist and are continuous, i.e. the function is continuously differentiable. An open ball $\mathcal{B}_{\delta}(0)$ is defined as the set $\{x\in\mathbb{R}^{n}: \|x\|<\delta, \delta>0\}$.

%$\mathbb{RH}_{\infty}^{n\times m}$ denotes the space of real-rational and proper functions without poles of the extended imaginary axis $\mathbb{C}^{0}:=i\mathbb{R}\cup \{\infty\}$ (in the closed right-half complex plane). $col(a,b)$ represents a column vector composed of $a$ and $b$, i.e.$\begin{bmatrix}
	%	a & b
	%\end{bmatrix}^{T}$.
	$L_{2}^{n}$ denotes the Hilbert space of $\mathbb{R}^{n}$-valued, square integrable functions on $[0,+\infty)$, with the usual norm and inner product denoted by $\Vert\cdot\Vert_{L_{2}}$ and $\langle\cdot,\cdot \rangle_{L_{2}}$, respectively. The superscripts are dropped when the dimensions are evident from the context. The extended $L_{2}^{n}$ space is denoted as $L_{2e}^{n}$, which consists of the functions $f$ that satisfy $P_{T}f\in L_{2}^{n}$  for all $T>0$, where $P_{T}$ denotes the truncation operator defined as:
	\[ (P_{T}f)(t) = \left\{
	\begin{array}{ll}
		f(t) & \mbox{for } t\leq T, \\
		0 & \mbox{otherwise.}
	\end{array} \right. \]
	Let $G:L_{2}\rightarrow L_{2}$ be a linear operator. $G$ is  causal if $P_{T}GP_{T}=P_{T}G$ for all $T>0$. The induced norm of $G$ is defined as
	\begin{align*}
		\Vert G\Vert=\sup_{u\in L_{2}, u\neq0}\frac{\Vert Gu \Vert_{L_{2}}}{\Vert u\Vert_{L_{2}}}.
	\end{align*}
	%$G$ is said to be bounded if $\Vert G\Vert<\gamma$ for some $\gamma>0$. $G$ is said to be stable if it is causal and bounded.
	\begin{definition}\citep{van16}
		The system has $L_{2}$-gain $\leq$ $
		\gamma$ $(\gamma>0)$ if it is dissipative with respect to the supply rate $\gamma^{2} u^{T}u-y^{T}y$; that is there exists a non-negative storage function $V(x)\in \mathcal{C}^{1}$, such that
		$$\dot{V}(x)\leq\gamma^{2}u^{T}u-y^{T}y.$$
	\end{definition}
	
	\begin{definition}
		For $V(x)\in\mathbb{R}$ of a class $\mathcal{C}^{1}$, and $y(x)=[y_{1}(x), y_{2}(x), \cdots, y_{n}(x)]^{T}\in\mathbb{R}^{n}$ with the first deviation exists, where $x=[x_{1}, x_{2}, \cdots, x_{n}]^{T}\in\mathbb{R}^{n}$, define
		\begin{align*}
			\nabla V(x)=\begin{bmatrix}
				\frac{\partial V(x)}{\partial x_{1}} & \frac{\partial V(x)}{\partial x_{2}} & \cdots & \frac{\partial V(x)}{\partial x_{n}}
			\end{bmatrix}^{T},
		\end{align*}
		\begin{align*}
			\nabla y(x)=\begin{bmatrix}
				\frac{\partial y_{1}(x)}{\partial x_{1}} & \frac{\partial y_{2}(x)}{\partial x_{1}} & \cdots & \frac{\partial y_{n}(x)}{\partial x_{1}}\\
				\vdots & \vdots & \vdots & \vdots \\
				\frac{\partial y_{1}(x)}{\partial x_{n}} & \frac{\partial y_{2}(x)}{\partial x_{n}} & \cdots & \frac{\partial y_{n}(x)}{\partial x_{n}}
			\end{bmatrix}.
		\end{align*}
	\end{definition}
	
	\begin{definition}\citep{GP18}
		(NI) The system is negative imaginary if there exists a non-negative storage function $V(x): \mathbb{R}^{n}\rightarrow \mathbb{R}$ of a class $\mathcal{C}^{1}$ such that the following dissipation inequality 
		$$\dot{V}(x)\leq u^{T}\dot{y}$$
		holds for all $t\geq0$.
		
		(I-SNI) The system is input strictly negative imaginary if there exists a non-negative  storage function $V(x): \mathbb{R}^{n}\rightarrow \mathbb{R}$ of a class $\mathcal{C}^{1}$ and a scalar $\epsilon>0$ such that the following dissipation inequality 
		$$\dot{V}(x)\leq u^{T}\dot{y}-\epsilon u^{T}u$$
		holds for $t\geq0$.
	\end{definition}
	\begin{definition}\citep{S21}
		(O-SNI) The system is output strictly negative imaginary if there exists a non-negative storage function $V(x): \mathbb{R}^{n}\rightarrow \mathbb{R}$ of a class $\mathcal{C}^{1}$ and a scalar $\delta>0$ such that the following dissipation inequality 
		$$\dot{V}(x)\leq u^{T}\dot{y}-\delta \dot{y}^{T}\dot{y}$$
		holds for all $t\geq0$.
	\end{definition}

\section{The dissipativity  and $L_{2}$ stability of  input affine systems}
In this section, we consider the dissipativity with the given supply rate $\omega(u, \dot{y})$ for a class of nonlinear input affine systems. Furthermore, we get the necessary and sufficient conditions of negative imaginary property, input strictly negative imaginary property and output strictly negative imaginary property for the class of systems. Besides, we also think about the relationship between the negative imaginary property and $L_{2}$ stability for a certain nonlinear/linear input affine system. Meanwhile, we give an example to explain the feasibility of the given results.

We will consider the  causal nonlinear system $G: L_{2e}\rightarrow L_{2e}$ represented  by the following state-space:
\begin{align}
	G: \begin{cases}
		\dot{x}(t)=f(x(t))+g(x(t))u(t) \\
		y(t)=h(x(t))\\
		x(0)=x_{0},
	\end{cases}
\end{align}
where $x(t)\in\mathbb{R}^{n}, u(t), y(t)\in\mathbb{R}^{n},$ and $f:\mathbb{R}^{n}\rightarrow\mathbb{R}^{n}, g: \mathbb{R}^{n}\rightarrow\mathbb{R}^{n\times n}, h:\mathbb{R}^{n}\rightarrow\mathbb{R}^{n}$ are smooth functions of $x$ with $f(0)=0$ and $h(0)=0$. 
%That is, the origin $x=0$ is an equilibrium point of the uncontrolled (free) system.

First, let us review the related definitions of dissipative systems.
\begin{definition}\citep{B20}
	The system $G$ is dissipative with respect to the supply rate  $\omega(t)$ if there exists a storage function $V(x)\geq0$ with $V(0)=0$ such that the following dissipation inequality holds:
	$$V(x(t))\leq V(x(0))+\int_{0}^{t}\omega(s)ds$$
	along all possible trajectories of $G$ starting at $x(0)$, for all $x(0)$, $t\geq0$ (or for all admissible controllers $u(\cdot)$ that drive the state from $x(0)$ to $x(t)$ on the interval [0,t]), where the supply rate $\omega(t)$ is that
	$$\int_{0}^{t}|\omega (s)|ds < +\infty$$
	for all $x(0)$, $t\geq0$ and admissible $u(\cdot)$.
	%	
	%	Dissipativity is also introduced by Hill and Moylan \cite{HMa} as follows:

\end{definition}
\begin{definition}\citep{B20}
	(Available Storage) The available storage $V_{a}(\cdot)$ of the system is given by
	\begin{align*}
		0\leq V_{a}(x)=\sup _{x=x(0), \,\,u(\cdot), \,\,t\geq0} -\left\{\int_{0}^{t}\omega (s)ds\right\},
	\end{align*}
	where $V_{a}(x)$ is the maximum amount of energy which can be extracted from the system with initial state $x_{0}=x(0)$.
\end{definition}

Then, we give the following assumptions for  the expected results.

Assumption 1: The state space of the system (1) is reachable from the origin. That is, given any $x_{1}$ and $t_{1}$, there exists $t_{0}\leq t_{1}$ and an admissible control $u(\cdot)$ such that the state can be driven from $x(t_{0})=0$ to $x(t_{1})=x_{1}$.

Assumption 2: The available storage $V_{a}(\cdot)$, when it exists, is a differentiable function of $x$.

Assumption 3: The supply rate $\omega(t)$ of the system for all admissible $u(\cdot)$ and all $t_{1}\geq t_{0}$ satisfies
\begin{align*}
	\int_{ t_{0}}^{t_{1}}\omega(s)ds\geq0
\end{align*}
with $x(t_{0})=0$ and along trajectories of the system.

We will consider the dissipativity of system $G$ with the following supply rate:
\begin{align}
	\omega(u, \dot{y})&=\dot{y}^{T}Q\dot{y}+2\dot{y}^{T}Su+u^{T}Ru \notag\\
	&=\begin{bmatrix}
		\dot{y}^{T} & u^{T}
	\end{bmatrix}\begin{bmatrix}
		Q & S \\
		S^{T} & R
	\end{bmatrix}\begin{bmatrix}
		\dot{y} \\
		u
	\end{bmatrix}
\end{align}
with $Q , S, R\in\mathbb{R}^{n\times n}$ and $Q=Q^{T}, R=R^{T}$.

\begin{theorem}
	The nonlinear system $(1)$ is dissipative in the sense of Definition 3.1 with respect to the supply rate $w(u, \dot{y})$ in (2) if and only if there exist functions $V: R^{n}\rightarrow R$  with $V(\cdot)$ differentiable, $L: R^{n}\rightarrow R^{n}$ and $ W: R^{n}\rightarrow R^{n\times n}$ such that:
	\begin{align}
		V(x)&\geq0, \notag\\
		V(0)&=0, \notag\\
		\frac{1}{2}g(x)^{T}\nabla V &=\hat{S}(x)^{T}f(x)-W^{T}L, \\
		\nabla V^{T}f(x)&=f(x)^{T}\nabla h(x)Q \nabla h(x)^{T}f(x)- L(x)^{T}L(x), \notag \\
		\hat{R}(x)&= W^{T}W,\notag
	\end{align}
	where \begin{align*}
		\begin{cases}
			\hat{S}(x):= \nabla h(x)Q \nabla h(x)^{T}g(x)+\nabla h(x)S,\\
			\hat{R}(x):=g(x)^{T}\nabla h(x)Q \nabla h(x)^{T}g(x)+2g(x)^{T}\nabla h(x)S+R.
		\end{cases}
	\end{align*}
\end{theorem}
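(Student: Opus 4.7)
The statement is a Hill--Moylan / Willems-type factorization of the supply rate $\omega(u,\dot y)$. My plan is to begin by eliminating $\dot y$ in favour of $(x,u)$. Since $\dot y = \nabla h(x)^T\dot x = \nabla h(x)^Tf(x) + \nabla h(x)^Tg(x)u$, substituting into (2) and collecting powers of $u$ gives
\begin{equation*}
\omega(u,\dot y) \;=\; f(x)^T\nabla h(x)\,Q\,\nabla h(x)^T f(x) \;+\; 2f(x)^T \hat S(x)\,u \;+\; u^T \hat R(x)\, u,
\end{equation*}
with $\hat S$ and $\hat R$ exactly as in the theorem. This is the algebraic identity that motivates the definitions of $\hat S, \hat R$ and is the only computation needed before invoking a quadratic-form decomposition.

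For sufficiency, assume $V,L,W$ satisfy (3). Along any trajectory $\dot V = \nabla V^T f + \nabla V^T g\,u$, so using the three equations of (3) to eliminate $\frac12 g^T\nabla V$, $\nabla V^T f$ and $\hat R$ in the expression for $\omega - \dot V$ yields
\begin{equation*}
\omega(u,\dot y) - \dot V(x) \;=\; L^TL + 2L^TWu + u^TW^TWu \;=\; (L+Wu)^T(L+Wu) \;\ge\; 0.
\end{equation*}
Together with $V\ge 0$ and $V(0)=0$, integration on $[0,t]$ gives the dissipation inequality of Definition~3.1.

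For necessity, invoke Assumptions 1--3 in the classical way: the available storage $V_a$ of Definition~3.2 is finite, non-negative, vanishes at the origin, and is itself a storage function; Assumption~2 upgrades it to $\mathcal C^1$. Set $V := V_a$. Because $V$ is a differentiable storage function and $u(t)$ is freely assignable, the differential dissipation inequality $\dot V \le \omega$ is equivalent to the pointwise statement that, for every $x$, the quadratic form
\begin{equation*}
Q_x(u) \;:=\; u^T\hat R(x) u + 2u^T\bigl(\hat S(x)^Tf(x)-\tfrac12 g(x)^T\nabla V(x)\bigr) + \bigl(f^T\nabla h\,Q\,\nabla h^Tf - \nabla V^T f\bigr)
\end{equation*}
is non-negative in $u\in\mathbb R^n$. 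Non-negativity of $Q_x$ in $u$ is equivalent to the existence of a factorization $Q_x(u)=(L(x)+W(x)u)^T(L(x)+W(x)u)$: pick $W(x):=\hat R(x)^{1/2}$ so that $W^TW=\hat R$; the linear-in-$u$ coefficient must then lie in $\mathrm{range}(W^T)=\mathrm{range}(\hat R)$, giving $W^TL = \hat S^Tf - \tfrac12 g^T\nabla V$; and the minimal-value condition $c\ge b^T\hat R^\dagger b$ combined with the freedom to add any element of $\ker W$ to $L$ lets one enforce $L^TL = f^T\nabla h\,Q\,\nabla h^T f - \nabla V^T f$. These are precisely the three identities in (3).

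The main obstacle is the last step: when $\hat R(x)$ is rank-deficient or its rank varies with $x$, one has to verify that the range condition is indeed forced by non-negativity of $Q_x$ (otherwise $Q_x\to -\infty$ along some direction in $u$) and that the kernel freedom is genuinely available to make the third equation an equality rather than a mere inequality. This is the delicate point in every Hill--Moylan-style lemma, but it is handled exactly as in the passive case by working with the pseudoinverse $\hat R^\dagger$ and choosing $L$ as $L=(W^T)^\dagger(\hat S^Tf - \tfrac12 g^T\nabla V) + v$ with $v\in\ker W$ tuned to match $L^TL$. All the remaining manipulations are routine algebra of the factorization $\omega-\dot V=(L+Wu)^T(L+Wu)$.
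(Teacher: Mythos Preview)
Your proposal is correct and follows essentially the same route as the paper: sufficiency by the direct computation $\omega-\dot V=(L+Wu)^{T}(L+Wu)\ge 0$, and necessity by taking $V=V_a$ (using Assumptions~1--3), differentiating the dissipation inequality to obtain a non-negative quadratic $d(x,u)$ in $u$, and matching coefficients with a factorization $(L+Wu)^{T}(L+Wu)$. The only difference is that the paper simply asserts such a factorization exists (writing ``for some $L(x)\in\mathbb{R}^{q}$, $W(x)\in\mathbb{R}^{q\times m}$ and some integer $q$'') and then equates coefficients, whereas you spell out the pseudoinverse construction and explicitly flag the rank-deficiency subtlety; in that respect your treatment is slightly more careful than the original.
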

\begin{proof}
	For sufficiency,
	\begin{align*}
		w(u, \dot{y})&=\dot{y}^{T}Q\dot{y}+2\dot{y}^{T}Su+u^{T}Ru \\
		&= [\nabla h(x)^{T}\dot{x}]^{T}Q [\nabla h(x)^{T}\dot{x}]+2[\nabla h(x)^{T}\dot{x}]^{T}Su+ u^{T}Ru\\
		&=[f(x)+g(x)u]^{T}\nabla h(x) Q \nabla h(x)^{T}[f(x)+g(x)u]\\
		&+2[f(x)+g(x)u]^{T}\nabla h(x)Su+ u^{T}Ru\\
		&=f(x)^{T}\nabla h(x)Q\nabla h(x)^{T}f(x)+2f(x)^{T}\nabla h(x)Q\nabla h(x)^{T}g(x)u\\
		&+u^{T}g(x)^{T}\nabla h(x)Q\nabla h(x)^{T}g(x)u\\
		&+2f(x)^{T}\nabla h(x)Su+2u^{T}g(x)^{T}\nabla h(x)Su+u^{T}Ru\\
		&=f(x)^{T}\nabla h(x)Q\nabla h(x)^{T}f(x)+2f(x)^{T}[\nabla h(x)Q\nabla h(x)^{T}g(x)+\nabla h(x)S]u\\
		&+u^{T}[g(x)^{T}\nabla h(x)Q\nabla h(x)^{T}g(x)+2g(x)^{T}\nabla h(x)S+R]u\\
		&:=f(x)^{T}\nabla h(x)Q\nabla h(x)^{T}f(x)+2f(x)^{T}\hat{S}(x)u+u^{T}\hat{R}(x)u\\
		&=\nabla V^{T}f(x)+L(x)^{T}L(x)+\nabla V^{T}g(x)u+2L(x)^{T}W(x)u+u^{T}W(x)^{T}W(x)u\\
		&=\nabla V^{T}\dot{x}+(L(x)+W(x)u)^{T}(L(x)+W(x)u)\\
		&\geq \dot{V}.
	\end{align*}
	Integrating the above we have
	$$\int_{0}^{t}\omega(s)ds\geq V(x(t))-V(x(0)).$$
	Based on the Definition 3.1, we know the system is dissipative with the given supply rate.
	
	For necessity, we will show that the available storage function $V_{a}(x)$ is the solution to the set of equations (3) for some $L(\cdot)$ and $W(\cdot)$. Since the system is reachable from the origin, there exists $u(\cdot)$ such that it can drive the state from $x(t_{-1})=0$ to $x(0)=x_{0}$ on the interval $[t_{-1},0]$. Since the system $G$ is  dissipative,  it satisfies Definition 3.1. Then there exists $V(x)\geq0, V(0)=0$ such that:
	\begin{align*}
		\int_{t_{-1}}^{t}w(s)ds=\int_{t_{-1}}^{0}w(s)ds+\int_{0}^{t}w(s)ds\geq V(x(t))-V(x(t_{-1}))\geq0,
	\end{align*}
	where $\int_{t_{-1}}^{t}w(s)ds$ is the energy introduced into the system. From the above we have
	\begin{align*}
		\int_{0}^{t}w(s)ds\geq-\int_{t_{-1}}^{0}w(s)ds.
	\end{align*}
	The right-hand side of the above depends only on $x_{0}$. Hence, there exists a bounded function $C(\cdot)\in\mathbb{R}$ such that
	$$\int_{0}^{t}w(s)ds\geq C(x_{0})>-\infty.$$
	Therefore the available storage function is bounded:
	$$0\leq V_{a}(x)=\sup _{x_{0}=x(0),t\geq0,u}\{-\int_{0}^{t}w(s)ds\}<+\infty.$$
	Dissipativity implies that $V_{a}(0)=0$ and the available storage $V_{a}(x)$ is itself a storage function, i.e.
	$$V_{a}(x(t))-V_{a}(x(0))\leq\int_{0}^{t}w(s)ds \,\,\, \forall t\geq0.$$
	Taking the derivative in the above,  it follows that
	\begin{align*}
		d(x,u)&:=w(u, \dot{y})-\frac{dV_{a}}{dt}\\
		&=w(u, \nabla h(x)^{T}(f(x)+g(x)u))-\nabla V_{a}^{T}[f(x)+g(x)u]\\
		&\geq0.
	\end{align*}
	Since the $w(u, \dot{y})=\dot{y}^{T}Q\dot{y}+2\dot{y}^{T}Su+u^{T}Ru$, it follows that $d(x,u)$ is quadratic in $u$ and may be factored as
	$$d(x,u)=[L(x)+W(x)u]^{T}[L(x)+W(x)u],$$
	for some $L(x)\in\mathbb{R}^{q}, W(x)\in\mathbb{R}^{q\times m}$ and some integer $q$. Therefore, we can obtain:
	\begin{align*}
		d(x,u)&=w(u, \dot{y})-\frac{dV_{a}}{dt}\\
		&=-\nabla V_{a}^{T}[f(x)+g(x)u]+\dot{y}^{T}Q\dot{y}+2\dot{y}^{T}Su+u^{T}Ru\\
		&=[L(x)+W(x)u]^{T}[L(x)+W(x)u].
	\end{align*}
	That is, 
	\begin{align*}
		d(x,u)&=-\nabla V_{a}^{T}f(x)-\nabla V_{a}^{T}g(x)u+f(x)^{T}\nabla h(x)Q\nabla h(x)^{T}f(x)\\&+2f(x)^{T}
		[\nabla h(x)Q\nabla h(x)^{T}g(x)+\nabla h(x)S]u\\
		&+u^{T}[g(x)^{T}\nabla h(x)Q\nabla h(x)^{T}g(x)
		+2g(x)^{T}\nabla h(x)S+R]u\\
		&:=-\nabla V_{a}^{T}f(x)-\nabla V_{a}^{T}g(x)u+f(x)^{T}\nabla h(x)Q\nabla h(x)^{T}f(x)\\
		&+2f(x)^{T}\hat{S}(x)u+u^{T}\hat{R}(x)u\\
		&=L(x)^{T}L(x)+2L(x)^{T}W(x)u+u^{T}W(x)^{T}W(x)u,\\
	\end{align*}
	which holds for all $x, u$.  Equating coefficients of $u$ we get:
	\begin{align*}
		\nabla V_{a}^{T}f(x)&=f(x)^{T}\nabla h(x)Q\nabla h(x)^{T}f(x)-L(x)^{T}L(x),\\
		\frac{1}{2}g(x)^{T}\nabla V_{a} &=\hat{S}(x)^{T}f(x)-W^{T}L, \\
		\hat{R}(x)&= W^{T}W, \notag
	\end{align*}
	where \begin{align*}
		\begin{cases}
			\hat{S}(x):= \nabla h(x)Q \nabla h(x)^{T}g(x)+\nabla h(x)S,\\
			\hat{R}(x):=g(x)^{T}\nabla h(x)Q \nabla h(x)^{T}g(x)+2g(x)^{T}\nabla h(x)S+R,
		\end{cases}
	\end{align*}
	which concludes the proof.
\end{proof}
\begin{corollary}
	The nonlinear system $(1)$ is negative imaginary if and only if there exist functions $V: R^{n}\rightarrow R$ with $V(\cdot)$ differentiable, $L: R^{n}\rightarrow R^{n}$ and $W: R^{n}\rightarrow R^{n\times n}$,  such that:
	\begin{align*}
		V(x)&\geq0, \\
		V(0)&=0, \\
		\frac{1}{2}g(x)^{T}\nabla V &=\frac{1}{2}\nabla h(x)^{T}f(x)-W^{T}L,\\
		\nabla V^{T}f(x)&=- L(x)^{T}L(x),  \\
		-g(x)^{T}\nabla h(x)&= -W^{T}W,
	\end{align*}
	Or we rewrite the above equalities as$:$
	\begin{align}
		\begin{bmatrix}
			\nabla V^{T}f(x) & \frac{1}{2}\nabla V^{T}g(x)-\frac{1}{2}f(x)^{T}\nabla h(x) \\
			\frac{1}{2}g(x)^{T}\nabla V-\frac{1}{2}\nabla h(x)^{T}f(x) & -g(x)^{T}\nabla h(x)
		\end{bmatrix}\leq0.
	\end{align}
\end{corollary}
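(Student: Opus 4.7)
The plan is to deduce this corollary directly from Theorem 3.3 by specialising the supply-rate matrices $(Q,S,R)$. The negative imaginary dissipation inequality $\dot V(x)\le u^T\dot y$ of Definition 3.3 is precisely the dissipation inequality of Theorem 3.3 with $Q=0$, $S=\tfrac12 I$, $R=0$, since $2\dot y^T(\tfrac12 I)u=u^T\dot y$. Hence the input affine system is negative imaginary if and only if it is dissipative with respect to this quadratic supply rate, and Theorem 3.3 already characterises this through the existence of functions $V,L,W$ satisfying three algebraic identities.

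The next step is to compute the specialised auxiliary functions. Substituting $Q=0$, $S=\tfrac12 I$, $R=0$ into the definitions of $\hat S$ and $\hat R$ collapses them to $\hat S(x)=\tfrac12\nabla h(x)$ and $\hat R(x)=g(x)^T\nabla h(x)$. Plugging these into the three identities of Theorem 3.3 kills the $Q$-quadratic term in the drift equation, turns the cross-term identity into $\tfrac12 g(x)^T\nabla V=\tfrac12\nabla h(x)^T f(x)-W^T L$, and turns the $\hat R$-factorisation into $g(x)^T\nabla h(x)=W^T W$, which is the stated $-g(x)^T\nabla h(x)=-W^T W$. Both directions of the equivalence in the corollary are thus inherited from the corresponding directions of Theorem 3.3.

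To obtain the matrix inequality (4), I would simply observe that the three identities together are equivalent to
\begin{align*}
\begin{bmatrix}
\nabla V^T f(x) & \tfrac12\nabla V^T g(x)-\tfrac12 f(x)^T\nabla h(x)\\
\tfrac12 g(x)^T\nabla V-\tfrac12\nabla h(x)^T f(x) & -g(x)^T\nabla h(x)
\end{bmatrix}
= -\begin{bmatrix}L(x)^T\\ W(x)^T\end{bmatrix}\begin{bmatrix}L(x) & W(x)\end{bmatrix},
\end{align*}
which is manifestly negative semidefinite. Conversely, if the left-hand matrix is $\le 0$, one factors it pointwise as $-M(x)^T M(x)$ with $M(x)=[\,L(x)\ \ W(x)\,]$ of appropriate block widths, and reading off the block partition recovers $L$ and $W$ satisfying the three identities required by Theorem 3.3.

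The only mildly delicate point I anticipate is this converse factorisation: for each fixed $x$ a Cholesky-type argument produces $L(x),W(x)$, so the real question is that these depend on $x$ with sufficient regularity for Theorem 3.3 to apply. Once that bookkeeping is settled, the corollary follows as a mechanical specialisation of Theorem 3.3.
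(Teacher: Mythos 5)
Your proposal is correct and is essentially the paper's own proof, which consists precisely of setting $Q=0$, $S=\tfrac12 I_n$, $R=0$ in Theorem 3.3 (the paper states only this one line, while you additionally carry out the computation of $\hat S$, $\hat R$ and the LMI reformulation explicitly). The regularity caveat you raise about the pointwise factorisation $-M(x)^TM(x)$ is a real subtlety, but it is already present in, and left unaddressed by, the paper's proof of Theorem 3.3 itself, so it is not a gap specific to your argument.
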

\begin{proof}
	Let $Q=0, S=\frac{1}{2}I_{n}$ and $R=0$ in Theorem 3.3.
\end{proof}
\begin{remark}
	The case of the continuous-time linear time-invariant system in \cite{GP18} can be covered by Corollary 3.4 when $V=\frac{1}{2}x^{T}Px\geq0$. Besides, we find practically that the result is the same as the Lemma 7 in \cite{G22} and the Theorem 1 in \cite{K23}, noting that $2\dot{y}^{T}Su=\dot{y}^{T}Su+u^{T}S\dot{y}$.
\end{remark}
\begin{corollary}
	(I-SNI) The nonlinear system $(1)$ is input strictly negative imaginary if and only if there exist functions $V: R^{n}\rightarrow R$ with $V(\cdot)$ differentiable, $L: R^{n}\rightarrow R^{n}$ and $W: R^{n}\rightarrow R^{n\times n}$ such that:
	\begin{align*}
		V(x)&\geq0, \\
		V(0)&=0, \\
		\frac{1}{2}g(x)^{T}\nabla V &=\frac{1}{2}\nabla h(x)^{T}f(x)-W^{T}L,\\
		\nabla V^{T}f(x)&=- L(x)^{T}L(x),  \\
		-g(x)^{T}\nabla h(x)+\epsilon I_{n}&= -W^{T}W.
	\end{align*}
	Or we rewrite the above equalities as$:$
	\begin{align}
		\begin{bmatrix}
			\nabla V^{T}f(x) & \frac{1}{2}\nabla V^{T}g(x)-\frac{1}{2}f(x)^{T}\nabla h(x)\\
			\frac{1}{2}g(x)^{T}\nabla V-\frac{1}{2}\nabla h(x)^{T}f(x) & -g(x)^{T}\nabla h(x)+\epsilon I_{n}
		\end{bmatrix}\leq0.
	\end{align}
\end{corollary}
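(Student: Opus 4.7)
The plan is to recognize Corollary 3.6 as a direct specialization of Theorem 3.3 for the supply rate that characterizes input strict negative imaginariness. Indeed, the defining inequality $\dot V \leq u^T\dot y - \epsilon u^T u$ from the I-SNI definition is exactly dissipativity with respect to $\omega(u,\dot y)$ in (2) for the choice $Q=0$, $S=\tfrac12 I_n$, and $R=-\epsilon I_n$, using the scalar identity $2\dot y^T S u = \dot y^T u = u^T\dot y$. Both symmetry hypotheses $Q=Q^T$ and $R=R^T$ required by Theorem 3.3 hold trivially, so its conclusion applies directly, mirroring exactly the passage from Theorem 3.3 to Corollary 3.4 for the NI case.

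Next I would insert these $(Q,S,R)$ into the auxiliary quantities $\hat S(x)$ and $\hat R(x)$. With $Q=0$ the $\nabla h\,Q\,\nabla h^T$ terms vanish, giving $\hat S(x) = \nabla h(x)\,S = \tfrac12 \nabla h(x)$ and $\hat R(x) = 2g(x)^T\nabla h(x)\,S + R = g(x)^T\nabla h(x) - \epsilon I_n$. Substituting into the five identities of Theorem 3.3: the third identity $\tfrac12 g^T\nabla V = \hat S^T f - W^T L$ becomes $\tfrac12 g^T\nabla V = \tfrac12 \nabla h^T f - W^T L$; the fourth identity $\nabla V^T f = f^T\nabla h\,Q\,\nabla h^T f - L^T L$ collapses to $\nabla V^T f = -L^T L$; and $\hat R = W^T W$ rewrites as $-g^T\nabla h + \epsilon I_n = -W^T W$. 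These are precisely the five conditions listed in the corollary.

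Finally I would establish the equivalence between the equality system and the LMI (6) by a square-root factorization. Transposing the third equality yields $\tfrac12 \nabla V^T g - \tfrac12 f^T\nabla h = -L^T W$, which together with the diagonal blocks $\nabla V^T f = -L^T L$ and $-g^T\nabla h + \epsilon I_n = -W^T W$ writes the block matrix in (6) as $-[L\ W]^T[L\ W]$, which is manifestly negative semidefinite. Conversely, any symmetric matrix of the form in (6) that is negative semidefinite admits a factorization of its negative as $A^T A$ with $A \in \mathbb{R}^{n\times(1+n)}$; partitioning $A$ into its first column and last $n$ columns recovers $L(x)$ and $W(x)$ satisfying the identities. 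The only point demanding care is dimensional bookkeeping, namely that $\hat R(x) = g^T\nabla h - \epsilon I_n$ is $n\times n$ so that $W$ may be taken square, matching the stated signature $W:\mathbb{R}^n\to\mathbb{R}^{n\times n}$; since this specialization is otherwise mechanical, there is no substantive obstacle in the argument.
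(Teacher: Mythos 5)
Your proposal is correct and follows exactly the paper's own route: the paper's entire proof of this corollary is the single line ``Let $Q=0$, $S=\frac{1}{2}I_{n}$ and $R=-\epsilon I_{n}$, $\epsilon>0$ in Theorem 3.3,'' and your computation of $\hat{S}(x)=\frac{1}{2}\nabla h(x)$ and $\hat{R}(x)=g(x)^{T}\nabla h(x)-\epsilon I_{n}$ together with the $[L\ W]^{T}[L\ W]$ factorization simply makes that specialization explicit. No gaps; you have merely written out the details the paper leaves implicit.
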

\begin{proof}
	Let $Q=0, S=\frac{1}{2}I_{n}$ and $R=-\epsilon I_{n}, \epsilon>0$ in Theorem 3.3.
\end{proof}
\begin{corollary}
	(O-SNI) The nonlinear system $(1)$ is output strictly negative imaginary if and only if there exist functions $V:R^{n}\rightarrow R$ with $V(\cdot)$ differentiable, $L: R^{n}\rightarrow R^{n}$ and $W: R^{n}\rightarrow R^{n\times n}$ such that:
	\begin{align*}
		V(x)&\geq0, \\
		V(0)&=0, \\
		\frac{1}{2}g(x)^{T}\nabla V &=-\delta g(x)^{T}\nabla h(x)\nabla h(x)^{T}f(x)+\frac{1}{2}\nabla h(x)^{T}f(x)-W^{T}L,\\
		\nabla V^{T}f(x)&=-\delta f(x)^{T}\nabla h(x)\nabla h(x)^{T}f(x)- L(x)^{T}L(x),  \\
		-W^{T}W&=\delta g(x)^{T}\nabla h(x)\nabla h(x)^{T}g(x)-g(x)^{T}\nabla h(x).
	\end{align*}
	Or we rewrite the above equalities as$:$
	%\begin{align}
	%  \begin{bmatrix}
		%  \nabla V^{T}f(x)+\delta f(x)^{T}\nabla h(x)\nabla h(x)^{T}f(x) & \frac{1}{2}\nabla V^{T}g(x)-\frac{1}{2}f(x)^{T}\nabla h(x)+\delta f(x)^{T}\nabla h(x)\nabla h(x)^{T} g(x) \\
		%    \frac{1}{2}g(x)^{T}\nabla V-\frac{1}{2}\nabla h(x)^{T}f(x)+\delta g(x)^{T}\nabla h(x)\nabla h(x)^{T}f(x) & \delta g(x)^{T}\nabla h(x)\nabla h(x)^{T}g(x)-g(x)^{T}\nabla h(x)
		%    \end{bmatrix}\leq0.
	%\end{align}
	\begin{multline} \left[\begin{array}{c}
			\nabla V^{T}f(x)+\delta f(x)^{T}\nabla h(x)\nabla h(x)^{T}f(x)\\
			\frac{1}{2}g(x)^{T}\nabla V-\frac{1}{2}\nabla h(x)^{T}f(x)+\delta g(x)^{T}\nabla h(x)\nabla h(x)^{T}f(x)\\
		\end{array}\right.\\
		\left.\begin{array}{c}
			\frac{1}{2}\nabla V^{T}g(x)-\frac{1}{2}f(x)^{T}\nabla h(x)+\delta f(x)^{T}\nabla h(x)\nabla h(x)^{T} g(x)\\
			\delta g(x)^{T}\nabla h(x)\nabla h(x)^{T}g(x)-g(x)^{T}\nabla h(x)\\
		\end{array}\right]\leq0
	\end{multline}
\end{corollary}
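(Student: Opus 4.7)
The plan is to read off the corollary as a direct specialization of Theorem 3.3 for a particular choice of the dissipative triple $(Q,S,R)$, and then to recast the three scalar/matrix equations as a single block matrix inequality via the standard Lur'e-type factorization. The O-SNI dissipation inequality $\dot V \leq u^T\dot y - \delta \dot y^T \dot y$ fits the general quadratic supply rate (2) exactly when we set $Q=-\delta I_n$, $S=\tfrac12 I_n$, $R=0$, because then $\dot y^T Q\dot y + 2\dot y^T S u + u^T R u = -\delta \dot y^T\dot y + \dot y^T u$, which agrees with Definition 2.4 after transposing the scalar $u^T\dot y$.

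With this choice, the first step is to substitute into the auxiliary matrices $\hat S(x)$ and $\hat R(x)$ defined in Theorem 3.3. One obtains
\[
\hat S(x) = -\delta\,\nabla h(x)\nabla h(x)^T g(x) + \tfrac12 \nabla h(x),\qquad
\hat R(x) = -\delta\, g(x)^T\nabla h(x)\nabla h(x)^T g(x) + g(x)^T\nabla h(x),
\]
and plugging these into the three equations (3) of Theorem 3.3 yields precisely the three displayed equations of the corollary, with the correct signs and the $\delta$-terms sitting in the expected places. No further dissipativity argument is needed here, since Theorem 3.3 already supplies the iff via the available storage $V_a$.

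The second step is the rewriting as the block matrix inequality (6). The three equations of the corollary can be read as
\[
\nabla V^T f(x) + \delta f^T\nabla h\nabla h^T f = -L^T L,\quad
\tfrac12 g^T\nabla V - \tfrac12 \nabla h^T f + \delta g^T\nabla h\nabla h^T f = -W^T L,
\]
together with $\delta g^T\nabla h\nabla h^T g - g^T\nabla h = -W^T W$. Collecting these into a $2\times 2$ block and observing that the right-hand side equals $-\bigl[\begin{smallmatrix} L^T\\ W^T \end{smallmatrix}\bigr]\bigl[\begin{smallmatrix} L & W \end{smallmatrix}\bigr]\leq 0$ gives (6) directly; conversely, any negative semidefinite block matrix of this form admits such a factorization (Cholesky/spectral), providing the reverse direction.

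I expect no serious obstacle: the argument is a substitution plus a Schur-type factorization. The only point that needs a bit of care is bookkeeping with $S = \tfrac12 I_n$ versus the $2\dot y^T S u$ convention in (2), to make sure the coefficient in front of $\nabla h^T f$ in the off-diagonal block is $\tfrac12$ (not $1$), and that the $\delta$-terms attach to the correct entries of the block matrix. Once those are tracked, the identification with (6) is immediate and the proof reduces to the one-line invocation: apply Theorem 3.3 with $Q=-\delta I_n$, $S=\tfrac12 I_n$, $R=0$.
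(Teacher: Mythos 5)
Your proposal is correct and follows essentially the same route as the paper, whose entire proof is the one-line specialization $Q=-\delta I_{n}$, $S=\tfrac{1}{2}I_{n}$, $R=0$ in Theorem 3.3; your computations of $\hat{S}(x)$ and $\hat{R}(x)$ and the resulting three equations match the corollary exactly. You in fact supply more detail than the paper does, by spelling out the factorization $-\bigl[\begin{smallmatrix} L^{T}\\ W^{T}\end{smallmatrix}\bigr]\bigl[\begin{smallmatrix} L & W\end{smallmatrix}\bigr]\leq 0$ that justifies the equivalence with the block matrix inequality (6).
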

\begin{proof}
	Let $Q=-\delta I_{n}, \delta>0, S=\frac{1}{2}I_{n}$ and $R=0$ in Theorem 3.3.
\end{proof}
\begin{theorem}
	Given the nonlinear time-invariant system $G$ described by (1), where $g(x)^{T}\nabla h(x)=I$. If there exists a non-negative storage function $V(x)\in \mathcal{C}^{1}$ with $V(0)=0$, such that
	$$f(x)^{T}\nabla h(x)\nabla h(x)^{T}f(x)-2\nabla V^{T}f(x)-4h(x)^{T}h(x)=0,$$
	then $G$ is negative imaginary with $V(x)$ if and only if $G$ is $L_{2}$-gain $\leq1$.
\end{theorem}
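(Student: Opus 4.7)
The plan is to rewrite both the NI dissipation inequality and the $L_{2}$-gain $\leq 1$ dissipation inequality as pointwise quadratic conditions in $u$, minimize each over $u\in\mathbb{R}^{n}$ to eliminate the input variable, and then check that the two resulting state-space inequalities differ by exactly the hypothesis identity.

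First I would substitute the state dynamics into each dissipation inequality. Using $\dot{V}=\nabla V^{T}(f(x)+g(x)u)$ and $\dot{y}=\nabla h(x)^{T}(f(x)+g(x)u)$, and invoking the standing hypothesis $g(x)^{T}\nabla h(x)=I$ (equivalently $\nabla h(x)^{T}g(x)=I$, and by squareness $\nabla h(x)g(x)^{T}=I$), the NI inequality $\dot{V}\leq u^{T}\dot{y}$ becomes
\begin{align*}
u^{T}u+u^{T}\bigl(\nabla h(x)^{T}f(x)-g(x)^{T}\nabla V\bigr)-\nabla V^{T}f(x)\;\geq\;0\quad\forall u,
\end{align*}
while the $L_{2}$-gain $\leq 1$ inequality $\dot{V}\leq u^{T}u-h(x)^{T}h(x)$ becomes
\begin{align*}
u^{T}u-u^{T}g(x)^{T}\nabla V-\nabla V^{T}f(x)-h(x)^{T}h(x)\;\geq\;0\quad\forall u.
\end{align*}
Each is a quadratic in $u$ with identity Hessian, so minimizing over $u$ (completing the square) removes the input and turns the two conditions into pointwise state-space inequalities.

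Carrying out the minimization, NI reduces to
\begin{align*}
\nabla V^{T}f(x)+\tfrac{1}{4}\bigl\|\nabla h(x)^{T}f(x)-g(x)^{T}\nabla V\bigr\|^{2}\leq 0,
\end{align*}
and $L_{2}$-gain $\leq 1$ reduces to
\begin{align*}
\nabla V^{T}f(x)+\tfrac{1}{4}\bigl\|g(x)^{T}\nabla V\bigr\|^{2}+h(x)^{T}h(x)\leq 0.
\end{align*}
Expanding the square in the NI condition, the cross term is $\tfrac{1}{2}f(x)^{T}\nabla h(x)g(x)^{T}\nabla V=\tfrac{1}{2}\nabla V^{T}f(x)$ by the $\nabla h(x)g(x)^{T}=I$ identity, so after clearing factors the NI condition is equivalent to
\begin{align*}
2\nabla V^{T}f(x)+f(x)^{T}\nabla h(x)\nabla h(x)^{T}f(x)+\|g(x)^{T}\nabla V\|^{2}\leq 0,
\end{align*}
and the $L_{2}$-gain condition (cleared of factors) is
\begin{align*}
4\nabla V^{T}f(x)+\|g(x)^{T}\nabla V\|^{2}+4h(x)^{T}h(x)\leq 0.
\end{align*}

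Finally I would subtract the two inequalities: their difference is exactly $2\nabla V^{T}f(x)-f(x)^{T}\nabla h(x)\nabla h(x)^{T}f(x)+4h(x)^{T}h(x)$, which vanishes identically by the standing hypothesis. Hence the two reduced inequalities are the same, and NI with storage $V(x)$ is equivalent to $L_{2}$-gain $\leq 1$ with the same storage function. The mildly delicate step I expect to slow me down is the cross-term simplification: it requires noting that $g(x)^{T}\nabla h(x)=I$ on square matrices also yields $\nabla h(x)g(x)^{T}=I$, so that $f(x)^{T}\nabla h(x)g(x)^{T}\nabla V$ collapses to $\nabla V^{T}f(x)$; without this, the NI and $L_{2}$-gain expressions cannot be matched, and the hypothesis on $f^{T}\nabla h\nabla h^{T}f$ would not suffice.
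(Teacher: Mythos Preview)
Your proposal is correct and follows essentially the same approach as the paper. The paper packages the NI and $L_{2}$-gain conditions as $2\times 2$ block matrix inequalities (via Corollary~3.4) and reduces them by Schur complement, but that reduction is exactly your ``minimize over $u$'' step, and the resulting scalar inequalities, the cross-term simplification via $\nabla h(x)g(x)^{T}=I$, and the substitution of the hypothesis identity are all identical to what you outline.
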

\begin{proof}
	For necessary, since $G$ is negative imaginary with $V(x)$ and $g(x)^{T}\nabla h(x)=I$, we have the following inequality from the Corollary 3.4 that
	\begin{align}
		\begin{bmatrix}
			\nabla V^{T}f(x) & \frac{1}{2}\nabla V^{T}g(x)-\frac{1}{2}f(x)^{T}\nabla h(x) \\
			\frac{1}{2}g(x)^{T}\nabla V-\frac{1}{2}\nabla h(x)^{T}f(x) & -I
		\end{bmatrix}\leq0.
	\end{align}
	We know the above inequality holds if and only if 
	\begin{align*}
		\nabla V^{T}f(x)+\left[\frac{1}{2}\nabla V^{T}g(x)-\frac{1}{2}f(x)^{T}\nabla h(x)\right]\left[\frac{1}{2}g(x)^{T}\nabla V-\frac{1}{2}\nabla h(x)^{T}f(x)\right]\leq0.
	\end{align*}
	That is,
	\begin{align*}
		\frac{1}{2}\nabla V^{T}f(x)+\frac{1}{4}\nabla V^{T}g(x)g(x)^{T}\nabla V+\frac{1}{4}f(x)^{T}\nabla h(x)\nabla h(x)^{T}f(x)\leq0.
	\end{align*}
	Based on the condition, we know $f(x)^{T}\nabla h(x)\nabla h(x)^{T}f(x)=2\nabla V^{T}f(x)+4h(x)^{T}h(x)$. So, the above inequality is converted into
	\begin{align*}
		\nabla V^{T}f(x)+h(x)^{T}h(x)+\frac{1}{4}\nabla V^{T}g(x)g(x)^{T}\nabla V\leq0.
	\end{align*}
	That is,
	\begin{align}
		\begin{bmatrix}
			\nabla V^{T}f(x)+h(x)^{T}h(x) & \frac{1}{2}\nabla V^{T}g(x) \\
			\frac{1}{2}g(x)^{T} \nabla V & -I
		\end{bmatrix}\leq0.
	\end{align}
	It follows that the system $G$ is $L_{2}$-gain $\leq1$.
	
	For sufficiency, since $G$ is $L_{2}$-gain $\leq1$, we have that
	\begin{align}
		\begin{bmatrix}
			\nabla V^{T}f(x)+h(x)^{T}h(x) & \frac{1}{2}\nabla V^{T}g(x) \\
			\frac{1}{2}g(x)^{T} \nabla V & -I
		\end{bmatrix}\leq0.
	\end{align}
	The above inequality holds if and only if
	\begin{align*}
		\nabla V^{T}f(x)+h(x)^{T}h(x)+\frac{1}{4}\nabla V^{T}g(x)g(x)^{T}\nabla V\leq0.
	\end{align*}
	Since $h(x)^{T}h(x)=-\frac{1}{2}\nabla V^{T}f(x)+\frac{1}{4}f(x)^{T}\nabla h(x)\nabla h(x)^{T}f(x)$, it follows that
	\begin{align*}
		\frac{1}{2}\nabla V^{T}f(x)+\frac{1}{4}\nabla V^{T}g(x)g(x)^{T}\nabla V+\frac{1}{4}f(x)^{T}\nabla h(x)\nabla h(x)^{T}f(x)\leq0.
	\end{align*}
	That is,
	\begin{align*}
		\begin{bmatrix}
			\nabla V^{T}f(x) & \frac{1}{2}\nabla V^{T}g(x)-\frac{1}{2}f(x)^{T}\nabla h(x) \\
			\frac{1}{2}g(x)^{T}\nabla V-\frac{1}{2}\nabla h(x)^{T}f(x) & -I
		\end{bmatrix}\leq0.
	\end{align*}
	Based on the Corollary 3.4, we know that $G$ is negative imaginary. 
\end{proof}

Besides, we also consider the linear case.

Given the following linear time-invariant system $\sum$:
\begin{align*}
	\begin{cases}
		\dot{x}=Ax+Bu \\
		y=B^{-1}x,
	\end{cases}
\end{align*}
where $x, u, y\in \mathbb{R}^{n}, A, B^{-1}\in \mathbb{R}^{n\times n}$.
\begin{theorem}
	If there exists $P=P^{T}>0$ such that $\frac{1}{2}PBB^{T}P-2B^{-T}B^{-1}+(B^{-1}A)^{T}(B^{-1}A)=0$, then $\sum$ is negative imaginary if and only if $\sum$ is $L_{2}$-gain $\leq1$, where the storage function $V=\frac{1}{2}x^{T}Px$.
\end{theorem}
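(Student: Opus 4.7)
My plan is to reduce both the negative imaginary property and the $L_{2}$-gain $\leq 1$ property to explicit matrix inequalities in $P, A, B$ by specializing the definitions to the quadratic storage function $V(x)=\frac{1}{2}x^{T}Px$, and then to show that the given algebraic identity is precisely the correction that makes these two matrix inequalities coincide.

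First I will compute $\dot{V}=\frac{1}{2}x^{T}(PA+A^{T}P)x+x^{T}PBu$ and $\dot{y}=B^{-1}\dot{x}=B^{-1}Ax+u$. Substituting into the NI dissipation inequality $\dot{V}\leq u^{T}\dot{y}$ and collecting terms as a quadratic form in $w:=[x^{T}\; u^{T}]^{T}$, the inequality becomes
\begin{align*}
w^{T}\begin{bmatrix} \frac{1}{2}(PA+A^{T}P) & \frac{1}{2}(PB-A^{T}B^{-T}) \\ \frac{1}{2}(B^{T}P-B^{-1}A) & -I \end{bmatrix}w\leq 0 \quad \text{for all } w,
\end{align*}
which is equivalent to the displayed block matrix being negative semidefinite. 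An analogous calculation turns $\dot{V}\leq u^{T}u-y^{T}y$ into the corresponding LMI for the $L_{2}$-gain property.

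Next I will apply the Schur complement: in both LMIs the $(2,2)$-block equals $-I$, which is negative definite, so each matrix inequality is equivalent to a single $n\times n$ inequality. Carrying out the algebra, the NI condition reduces to
\begin{align*}
(PA+A^{T}P)+PBB^{T}P+A^{T}B^{-T}B^{-1}A\leq 0,
\end{align*}
while the $L_{2}$-gain $\leq 1$ condition reduces to
\begin{align*}
(PA+A^{T}P)+2B^{-T}B^{-1}+\tfrac{1}{2}PBB^{T}P\leq 0.
\end{align*}

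Finally, subtracting the second inequality from the first, the difference of the two left-hand sides is exactly $\frac{1}{2}PBB^{T}P-2B^{-T}B^{-1}+(B^{-1}A)^{T}(B^{-1}A)$, which is the matrix assumed to vanish in the hypothesis. Hence the two inequalities are identical as matrix conditions on $P,A,B$, so one holds if and only if the other does, proving both implications simultaneously. The only delicate step is the symmetrization when passing from the bilinear terms $x^{T}PBu$ and $u^{T}B^{-1}Ax$ (which appear one-sided in the raw dissipation inequalities) to the symmetric off-diagonal block of the LMI; keeping the factors of $\frac{1}{2}$ straight is what makes the hypothesis line up with the difference of the two Schur-reduced inequalities.
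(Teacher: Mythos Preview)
Your proposal is correct and follows essentially the same approach as the paper: both arguments write the NI and $L_{2}$-gain $\leq 1$ conditions as $2\times 2$ block LMIs with $(2,2)$-block a negative multiple of the identity, take Schur complements to obtain the two scalar matrix inequalities you display, and use the algebraic hypothesis to pass between them. The only cosmetic difference is that the paper treats necessity and sufficiency separately (substituting the hypothesis into one Schur-reduced inequality to obtain the other each time), whereas you observe more economically that the difference of the two reduced inequalities is exactly the hypothesis matrix, handling both directions at once.
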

\begin{proof}
	For necessity, since $\sum$ is negative imaginary with $V$, it follows from the Corollary 3.4 that
	\begin{align}
		\begin{bmatrix}
			A^{T}P+PA & PB-A^{T}B^{-T} \\
			B^{T}P-B^{-1}A & -2I
		\end{bmatrix}\leq0.
	\end{align}
	The above LMI holds if and only if
	\begin{align*}
		A^{T}P+PA+\frac{1}{2}(PB-A^{T}B^{-T})(B^{T}P-B^{-1}A)\leq0.
	\end{align*}
	That is,
	\begin{align*}
		A^{T}P+PA+PBB^{T}P+(B^{-1}A)^{T}(B^{-1}A)\leq0.
	\end{align*}
	Based on conditions, we know that $(B^{-1}A)^{T}(B^{-1}A)=2B^{-T}B^{-1}-\frac{1}{2}PBB^{T}P$. Thus, the above inequality is converted into
	\begin{align*}
		A^{T}P+PA+2B^{-T}B^{-1}+\frac{1}{2}PBB^{T}P\leq0.
	\end{align*}
	That is, there exists $P=P^{T}>0$, such that
	\begin{align*}
		\begin{bmatrix}
			A^{T}P+PA+2B^{-T}B^{-1} & PB \\
			B^{T}P & -2I
		\end{bmatrix}\leq0.
	\end{align*}
	Taking $V=\frac{1}{2}x^{T}Px$, the above inequality holds if and only if $\dot{V}\leq u^{T}u-y^{T}y$.
	It follows that the system $\sum$ is $L_{2}$-gain $\leq1$.
	
	For sufficiency, the system $\sum$ is $L_{2}$-gain $\leq1$ with $V=\frac{1}{2}x^{T}Px$, we have
	\begin{align*}
		\begin{bmatrix}
			A^{T}P+PA+2B^{-T}B^{-1} & PB \\
			B^{T}P & -2I
		\end{bmatrix}\leq0.
	\end{align*}
	That is, \begin{align*}
		A^{T}P+PA+2B^{-T}B^{-1}+\frac{1}{2}PBB^{T}P\leq0.
	\end{align*}
	Since $\frac{1}{2}PBB^{T}P-2B^{-T}B^{-1}+(B^{-1}A)^{T}(B^{-1}A)=0$, the above inequality is converted into
	\begin{align*}
		A^{T}P+PA+PBB^{T}P+(B^{-1}A)^{T}(B^{-1}A)\leq0.
	\end{align*}
	That is,
	\begin{align*}
		\begin{bmatrix}
			A^{T}P+PA & PB-A^{T}B^{-T} \\
			B^{T}P-B^{-1}A & -2I
		\end{bmatrix}\leq0.
	\end{align*}
	It follows that the system $\sum$ is negative imaginary from the Corollary 3.4.
\end{proof}
\subsection{Example}
Consider the following linear time-invariant system:
\begin{align*}
	\begin{cases}
		\dot{x}=
		\begin{bmatrix}
			-1 & 0\\
			0 & -\frac{1}{2}
		\end{bmatrix}x+ \begin{bmatrix}
			1 & 0\\
			0 & 2
		\end{bmatrix}u,\\
		y=	\begin{bmatrix}
			1 & 0\\
			0 & \frac{1}{2}
		\end{bmatrix}x.
	\end{cases}
\end{align*}
Taking $P=\begin{bmatrix}
	\sqrt[2]{2}& 0\\
	0 & \frac{\sqrt[2]{14}}{8} 
\end{bmatrix}$. By calculation, it satisfies the condition of Theorem 3.8. Based on the above theorem, the system is negative imaginary if and only if it is  $L_{2}$ stable, where the storage function $V=\frac{1}{2}x^{T}Px$.
\section{The equivalence of asymptotical stability and strict dissipativity with the supply rate $w(u, \dot{y})$}
In this section, we consider the relationship between the asymptotical stability by the static output feedback control law and the dissipativity with the supply rate $\omega(u, \dot{y})$ for a certain linear time-invariant system.
\begin{definition}\citep{H08}
	(Locally asymptotical stability)
 The feedback control law $u$ locally asymptotically stabilizes the system around the equilibrium,  if there exists a $\mathcal{C}^{1}$ function $V(x)>0, x\neq0, V(0)=0$, such that for all $x\in\mathcal{X}\subseteq \mathbb{R}^{n}, x\neq0$,
	\begin{align*}
		\dot{V}(x)=\nabla V(x)^{T}\dot{x}<0.
	\end{align*}

	%(Globally asymptotical stability) If the above inequality is obtained for all $x\in \mathbb{R}^{n}$ and $V(x)$ is radially unbounded, then the feedback control law $u$ globally asymptotically stabilizes the system around the equilibrium.
\end{definition}
\begin{definition}
	A dynamic system is $(Q, S, R)$ dissipative with respect to the  supply rate $\omega(u, \dot{y})$ if it is dissipative with the following supply rate:
	\begin{align*}
		\omega(u, \dot{y})=\dot{y}^{T}Q\dot{y}+2\dot{y}^{T}Su+u^{T}Ru,
	\end{align*}
	where $Q$ and $R$ are symmetric.
\end{definition}
\begin{definition}
	A system is strictly $(Q, S, R)$ dissipative with respect to the supply rate $\omega(u, \dot{y})$ if it is $(Q, S, R)$ dissipative with the storage function $V$ and there exists $T>0$ such that
	\begin{align*}
		\dot{V}+x^{T}Tx\leq \dot{y}^{T}Q\dot{y}+2\dot{y}^{T}Su+u^{T}Ru,
	\end{align*}
	where $Q$ and $R$ are symmetric.
\end{definition}
Given the following controllable and observable linear time-invariant system:
\begin{align}
	\begin{cases}
		\dot{x}=Ax+Bu, \\
		y=B^{-1}x.
	\end{cases}
\end{align}
where $x, u, y \in\mathbb{R}^{n}, A, B^{-1} \in\mathbb{R}^{n\times n}$ and $AB=BA$.
\begin{theorem}
	The system (11) is (locally) asymptotically stable by the static output feedback control law $u=-2Ay$ if and only if the system is strictly $(Q, S, R)$ dissipative with the supply rate $w(u, \dot{y})$, where $Q=\frac{4}{\beta}I, S=-\frac{2}{\beta}I, R=\frac{1}{\beta}I$ and $0<\beta\in \mathbb{R}$.
\end{theorem}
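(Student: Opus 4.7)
The plan is to exploit a fortunate factorisation of the supply rate. With the prescribed $Q=\frac{4}{\beta}I$, $S=-\frac{2}{\beta}I$, $R=\frac{1}{\beta}I$, direct expansion gives
\begin{equation*}
\omega(u,\dot y)=\frac{1}{\beta}(2\dot y-u)^{T}(2\dot y-u),
\end{equation*}
a single perfect square. Since $\dot y=B^{-1}\dot x=B^{-1}Ax+u$, one has $2\dot y-u=2B^{-1}Ax+u$; and the commutativity hypothesis $AB=BA$ (equivalently $B^{-1}A=AB^{-1}$) combined with the proposed feedback $u=-2Ay=-2AB^{-1}x$ gives $2\dot y-u\equiv 0$. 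Thus the supply rate vanishes identically along closed-loop trajectories, and the closed loop itself collapses to $\dot x=Ax-2BAB^{-1}x=-Ax$.

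For the sufficiency direction (strict dissipativity implies asymptotic stability), I would substitute $u=-2Ay$ into the strict dissipation inequality $\dot V+x^{T}Tx\leq\omega(u,\dot y)$. The right-hand side is zero by the observation above, so $\dot V\leq-x^{T}Tx<0$ for every $x\neq 0$, exhibiting $V$ as a strict Lyapunov function for the closed loop and hence (local) asymptotic stability in the sense of Definition 4.1.

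For the necessity direction, asymptotic stability of $\dot x=-Ax$ means $-A$ is Hurwitz, so Lyapunov's theorem produces $P=P^{T}>0$ with $M:=A^{T}P+PA>0$. Taking $V(x)=\frac{1}{2}x^{T}Px$, I would expand $\dot V+x^{T}Tx-\omega(u,\dot y)$ as a quadratic form in $(x,u)$; its $u^{T}u$-coefficient is $-\frac{1}{\beta}I<0$, so completing the square in $u$ is valid. The algebraic key is that $BB^{-1}=I$ makes the bulky cross term $\frac{4}{\beta}A^{T}B^{-T}B^{-1}A$ appearing in $\omega$ cancel exactly against the corresponding piece of the completed-square residual, while the $PA$ and $A^{T}P$ contributions combine into $-\frac{1}{2}M$. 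The residual quadratic in $x$ then reduces cleanly to $-\frac{1}{2}M+T+\frac{\beta}{4}PBB^{T}P$. Choosing $T=\frac{1}{4}M>0$ and then $\beta>0$ small enough that $\beta PBB^{T}P\leq M$ (feasible because $M>0$) renders this expression negative semidefinite, so strict $(Q,S,R)$-dissipativity holds with storage function $V$ and this $T$.

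The main obstacle is keeping control of the quadratic-form bookkeeping after completion of squares: the coefficients $(Q,S,R)$ were calibrated precisely so that the $A^{T}B^{-T}B^{-1}A$ pieces cancel and the $PA+A^{T}P$ pieces combine into $-\frac{1}{2}M$, and only with these precise choices is there room to slip in a positive definite $T$. Commutativity $AB=BA$ plays its role twice: it makes the supply rate vanish under the feedback (so sufficiency is immediate), and it reduces the closed loop to $\dot x=-Ax$ so that Lyapunov's theorem applies directly (opening necessity). Everything else---Lyapunov's theorem and a scaling argument to select $\beta$---is routine.
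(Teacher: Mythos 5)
Your proof is correct. The necessity half follows essentially the same path as the paper: Lyapunov's theorem for the closed loop $\dot x=-Ax$, completion of the square (equivalently, a Schur complement) in $u$, and the exact cancellation of the $\frac{4}{\beta}A^{T}B^{-T}B^{-1}A$ terms; your residual $-\frac{1}{2}M+T+\frac{\beta}{4}PBB^{T}P$ is precisely the paper's condition after substituting $AB=BA$, up to the harmless factor $\frac{1}{2}$ in your storage function. Your sufficiency half, however, takes a genuinely different and cleaner route. The paper proves sufficiency by rearranging the dissipation LMI into $(A-2BAB^{-1})^{T}P+P(A-2BAB^{-1})\le-\beta PBB^{T}P<0$, which presupposes a quadratic storage $V=x^{T}Px$ with $P>0$. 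You instead observe that $\omega(u,\dot y)=\frac{1}{\beta}(2\dot y-u)^{T}(2\dot y-u)$ is a perfect square that vanishes identically along closed-loop trajectories (since $2\dot y-u=2B^{-1}Ax+u$ and $u=-2AB^{-1}x=-2B^{-1}Ax$ by commutativity), so the strict dissipation inequality immediately yields $\dot V\le-x^{T}Tx$ for \emph{any} storage function, quadratic or not; this also makes transparent, more so than the paper's Remark 4.5, why this particular $(Q,S,R)$ is paired with the gain $-2A$. One point you should make explicit (a gap the paper shares): Definition 4.1 requires $V(x)>0$ for $x\ne0$, whereas dissipativity only supplies $V\ge0$. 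In your setting this is easy to repair: if $V(x_{0})=0$ for some $x_{0}\ne0$, integrating $\dot V\le-x^{T}Tx$ along the closed-loop trajectory from $x_{0}$ gives $0\le V(x(t))\le-\int_{0}^{t}x^{T}Tx\,ds\le0$, forcing $x\equiv0$ and contradicting $x(0)=x_{0}\ne0$; so $T>0$ already guarantees positive definiteness of the storage.
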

\begin{proof}
	For necessity, the system (11) is asymptotically stable by the static output feedback control law $u=-2Ay$ if there exist $P=P^{T}\in\mathbb{R}^{n\times n}>0$, for all $x\in \mathcal{B}_{\delta}(0)\subseteq \mathbb{R}^{n}$ such that
	\begin{align*}
		(A-2BAB^{-1})^{T}P+P(A-2BAB^{-1})<0.
	\end{align*}
	It follows that there exist $T>0$ and $0<\beta\in \mathbb{R}$ such that
	\begin{align*}
		(A-2BAB^{-1})^{T}P+P(A-2BAB^{-1})+T+\beta PBB^{T}P\leq0.
	\end{align*}
	That is,
	\begin{align}
		A^{T}P+PA+T\leq 2B^{-T}A^{T}B^{T}P+2PBAB^{-1}-\beta PBB^{T}P.
	\end{align}
	The system (11) is strictly $(Q, S, R)$ dissipative with the storage function $V=x^{T}Px$ and the supply rate $w(u, \dot{y})$, where $Q=\frac{4}{\beta}I, S=\frac{2}{\beta}I, R=\frac{1}{\beta}I$ and $0<\beta\in \mathbb{R}$ if and only if
	\begin{align*}
		\begin{bmatrix}
			A^{T}P+PA+T-\frac{4}{\beta}A^{T}B^{-T}B^{-1}A & PB-\frac{2}{\beta}A^{T}B^{-T} \\
			B^{T}P-\frac{2}{\beta}B^{-1}A & -\frac{1}{\beta}I
		\end{bmatrix}\leq0.
	\end{align*}
	That is,
	\begin{align}
		A^{T}P+PA+T&\leq \frac{4}{\beta}A^{T}B^{-T}B^{-1}A-\beta PBB^{T}P+2A^{T}B^{-T}B^{T}P\notag\\
		&+2PA-\frac{4}{\beta}A^{T}B^{-T}B^{-1}A.
	\end{align}
	As $AB=BA$, the right-hand side of (12) is obviously not greater than the expression on the right-hand side of (13), i.e.,
	\begin{align*}
		2B^{-T}A^{T}B^{T}P+2PBAB^{-1}-\beta PBB^{T}P\leq -\beta PBB^{T}P+2A^{T}B^{-T}B^{T}P+2PA.
	\end{align*}
It follows that the system (11) is strictly $(Q, S, R)$ dissipative with the supply rate $w(u, \dot{y})$.
	
	For sufficiency, based on the condition of strict dissipation, we have
	\begin{align*}
		A^{T}P+PA+T\leq -\beta PBB^{T}P+2A^{T}B^{-T}B^{T}P+2PBAB^{-1}.
	\end{align*}
	That is,
	\begin{align*}
		(A-2BAB^{-1})^{T}P+P(A-2BAB^{-1})\leq -\beta PBB^{T}P<0.
	\end{align*}
	It follows that the system (11) is asymptotically stable by the static output feedback law $u=-2Ay$.
\end{proof}
\begin{remark}
	In fact, for the proof of the sufficiency in the above theorem, our problem is whether there exists a linear static output feedback controller $u=Ky$ such that this dissipative system is held asymptotically stable around the origin. The answer to this problem can be obtained  by analyzing the necessity conditions for a differentiable function $\omega(u, \dot{y})\geq0$ to attain its minimum zero. If a certain control signal minimizes $\omega(u, \dot{y})$, we know $(A+BKB^{-1})^{T}P+P(A+BKB^{-1})<0$ guarantees asymptotical stability. The aforementioned necessary condition amounts to
	\begin{align*}
		\nabla \omega= \begin{bmatrix}
			\omega_{u} \\
			\omega_{\dot{y}}
		\end{bmatrix}= \begin{bmatrix}
			\frac{2}{\beta} u-\frac{4}{\beta}\dot{y} \\
			\frac{4}{\beta}\dot{y}-\frac{2}{\beta}u
		\end{bmatrix}=\begin{bmatrix}
			0 \\
			0
		\end{bmatrix},
	\end{align*}
	where $\omega_{u}$ and $\omega_{\dot{y}}$ are the partial derivatives of the supply rate with respect to $u$ and $\dot{y}$, respectively. It follows that $u=-2Ay$.
	%=\frac{4}{\beta}\dot{y}^{T}\dot{y}-2\times\frac{2}{\beta}\dot{y}^{T}u+\frac{1}{\beta}u^{T}u=0
\end{remark}
\subsection{Example}
Consider the following system
\begin{align*}
	\begin{cases}
		\dot{x}=
		\begin{bmatrix}
			1 & 0\\
			0 & 2
		\end{bmatrix}x+ \begin{bmatrix}
			1 & 0\\
			0 & \frac{1}{2}
		\end{bmatrix}u,\\
		y=	\begin{bmatrix}
			1 & 0\\
			0 & 2
		\end{bmatrix}x.
	\end{cases}
\end{align*}
Based on the Theorem 4.4, when $P=I$, the system is asymptotically stable under the static output feedback control law $u=\begin{bmatrix}
	-2 & 0\\
	0 & -4
\end{bmatrix}y$.  Let $\beta=\frac{1}{2}, T=\begin{bmatrix}
	\frac{1}{2} & 0\\
	0 & \frac{15}{4}
\end{bmatrix}, Q=8I, S=-4I$ and $R=2I$, the system is strictly $(Q, S, R)$ dissipative with the supply rate $\omega(u, \dot{y})$. It is also established when converted. That is, if the given system is strictly $(Q, S, R)$ dissipative with the supply rate $\omega(u, \dot{y})$ with some certain dissipative matrices and the storage function $V=x^{T}Px$, we can obtain that the system is locally asymptotically stable under the given static output feedback control law.

\section{Conclusion}
The main aim of this paper is to research the dissipative property with the supply rate $\omega(u, \dot{y})$ for input affine systems. Negative imaginary theory is the special class of dissipative theory. It may help us to have a better understanding of negative imaginary property under the dissipative framework. Beyond this, we analysis the relationship between the $L_{2}$-gain stability and  the negative imaginary property for this class of system under the certain conditions.  Lastly, we demonstrate the equivalence between the locally asymptotical stability and strict dissipativity with the supply rate $\omega(u, \dot{y})$ and some kind of dissipative matrices are given for a certain linear time-invariant system.

\section*{Disclosure statement}
No potential conflict of interest was reported by the author(s).

\section*{Funding}
This research is supported by the National Natural Science Foundation of China (NSFC) (Item number: 12271075, 12031002 ).

\end{document}